\def\picEEF{
\begin{tikzpicture}[scale=0.55]
\fill[color=blue!20!white] (3,5) circle (0.4);
\fill[color=blue!20!white] (5,5) circle (0.4);
\fill[color=blue!20!white] (2,4) circle (0.4);
\fill[color=blue!20!white] (4,4) circle (0.4);
\fill[color=blue!20!white] (2,3) circle (0.4);
\fill[color=blue!20!white] (4,3) circle (0.4);
\fill[color=blue!20!white] (1-0.2,2+0.2) circle (0.25);
\fill[color=blue!20!white] (1+0.2,2-0.2) circle (0.25);
\fill[color=blue!20!white] (3,1) circle (0.4);
\fill[color=blue!20!white] (5,1) circle (0.4);
\foreach \i in {0,...,5}{
  \draw[thick] (0.5,\i+0.5)--(5.5,\i+0.5) (\i+0.5,0.5)--(\i+0.5,5.5);
}
\foreach \i in {1,...,5}{
  \foreach \j in {1,...,5}{
    \ifthenelse{\i>\j}{\pgfmathtruncatemacro{\k}{5+\j-\i}}{\pgfmathtruncatemacro{\k}{\j-\i}};
    \node at (6-\i,\j) {\k};
  }
}
\end{tikzpicture}}
\def\picEEE{
\begin{tikzpicture}[scale=0.55]
\fill[color=blue!20!white] (1,4) circle (0.4);
\fill[color=blue!20!white] (2,2) circle (0.4);
\fill[color=blue!20!white] (3,5) circle (0.4);
\fill[color=blue!20!white] (5,3) circle (0.4);
\fill[color=blue!20!white] (4,1) circle (0.4);
\foreach \i in {0,...,5}{
  \draw[thick] (0.5,\i+0.5)--(5.5,\i+0.5) (\i+0.5,0.5)--(\i+0.5,5.5);
}
\foreach \i in {1,...,5}{
  \foreach \j in {1,...,5}{
    \ifthenelse{\i>\j}{\pgfmathtruncatemacro{\k}{5+\j-\i}}{\pgfmathtruncatemacro{\k}{\j-\i}};
    \node at (6-\i,\j) {\k};
  }
}
\end{tikzpicture}}
\def\picE{
\begin{tikzpicture}[scale=.4]
\draw[-] (0,10) to (10,0);
\draw[-] (1,5.2) to (1,9);
\draw[-] (1.2,5) to (5,5);
\draw[-] (6,1.2) to (6,4);
\draw[-] (6.2,1) to (9,1);
\begin{scope}
    \clip (0,0) rectangle (10,10);
    \draw[line width=3pt,color=ISUYellow] (0,10) to (1,9);
    \draw[line width=3pt,color=ISURed] (5,5) to (1,9);
    \draw[line width=3pt,color=ISUYellow] (5,5) to (6,4);
    \draw[line width=3pt,color=ISURed] (9,1) to (6,4);
    \draw[line width=3pt,color=ISUYellow] (9,1) to (10,0);
\end{scope}
\draw[line width=.5pt] (0,0) to (0,10);
\draw[line width=.5pt] (10,10) to (0,10);
\draw[line width=.5pt] (10,10) to (10,0);
\draw[line width=.5pt] (10,0) to (0,0);
\node[] () at  (1,5) {$\times$};
\node[] () at  (6,1) {$\times$};
\fill[color=green] (1,9) circle (0.2);
\fill[color=green](6,4) circle (0.2);
\fill[color=blue](5,5) circle (0.2);
\fill[color=blue](9,1) circle (0.2);
\end{tikzpicture}
}
\def\picG{
\begin{tikzpicture}[scale=.4]
\draw[-] (0,10) to (10,0);
\draw[-] (1,5.2) to (1,9);
\draw[-] (1.2,5) to (5,5);
\draw[-] (5,1.2) to (5,5);
\draw[-] (5.2,1) to (9,1);
\begin{scope}
    \clip (0,0) rectangle (10,10);
    \draw[line width=3pt,color=ISUYellow] (0,10) to (1,9);
    \draw[line width=3pt,color=ISURed] (5,5) to (1,9);
    \draw[line width=3pt,color=ISURed] (9,1) to (5,5);
    \draw[line width=3pt,color=ISUYellow] (9,1) to (10,0);
\end{scope}
\draw[line width=.5pt] (0,0) to (0,10);
\draw[line width=.5pt] (10,10) to (0,10);
\draw[line width=.5pt] (10,10) to (10,0);
\draw[line width=.5pt] (10,0) to (0,0);
\node[] () at  (1,5) {$\times$};
\node[] () at  (5,1) {$\times$};
\fill[color=green] (1,9) circle (0.2);
\fill[color=blue](5,5) circle (0.2);
\fill[color=green](5,5) circle (0.1);
\fill[color=blue](9,1) circle (0.2);
\end{tikzpicture}
}
\def\picF{
\begin{tikzpicture}[scale=.4]
\draw[-] (0,10) to (10,0);
\draw[-] (2,2.3) to (2,8);
\draw[-] (2.7,2) to (8,2);
\begin{scope}
    \clip (0,0) rectangle (10,10);
    \draw[line width=3pt,color=ISUYellow] (0,10) to (2,8);
    \draw[line width=3pt,color=violet] (2,8) to (8,2);
    \draw[line width=3pt,color=ISUYellow] (8,2) to (10,0);
\end{scope}
\draw[line width=.5pt] (0,0) to (0,10);
\draw[line width=.5pt] (10,10) to (0,10);
\draw[line width=.5pt] (10,10) to (10,0);
\draw[line width=.5pt] (10,0) to (0,0);
\node[] () at  (2,2) {\tiny$\times \times$};
\fill[color=green] (2,8) circle (0.2);
\draw[](2,8) circle (0.1);
\fill[color=blue](8,2) circle (0.2);
\draw[color=white](8,2) circle (0.1);
\end{tikzpicture}
}
\def\picA{
\begin{tikzpicture}[scale=.4]
\draw[-] (0,10) to (10,0);
\draw[-] (1,1.2) to (1,9);
\draw[-] (1.2,1) to (9,1);
\draw[-] (3,3.2) to (3,7);
\draw[-] (3.2,3) to (7,3);
\begin{scope}
    \clip (0,0) rectangle (10,10);
    \draw[line width=3pt,color=ISUYellow] (0,10) to (1,9);
    \draw[line width=3pt,color=ISURed] (1,9) to (3,7);
    \draw[line width=3pt,color=violet] (3,7) to (7,3);
    \draw[line width=3pt,color=ISURed] (7,3) to (9,1);
    \draw[line width=3pt,color=ISUYellow] (9,1) to (10,0);
    \end{scope}
\draw[line width=.5pt] (0,0) to (0,10);
\draw[line width=.5pt] (10,10) to (0,10);
\draw[line width=.5pt] (10,10) to (10,0);
\draw[line width=.5pt] (10,0) to (0,0);
\node[] () at  (3,3) {$\times$};
\node[] () at  (1,1) {$\times$};
\fill[color=green] (1,9) circle (0.2);
\fill[color=green] (3,7) circle (0.2);
\fill[color=blue](7,3) circle (0.2);
\fill[color=blue](9,1) circle (0.2);
\end{tikzpicture}
}
\def\picD{
\begin{tikzpicture}[scale=.4]
\draw[-] (0,10) to (10,0);
\draw[-] (2,1.2) to (2,8);
\draw[-] (2.2,1) to (4.8,1);
\draw[-] (5,1.2) to (5,5);
\draw[-] (5.2,1) to (9,1);
\begin{scope}
    \clip (0,0) rectangle (10,10);
    \draw[line width=3pt,color=ISUYellow] (0,10) to (2,8);
    \draw[line width=3pt,color=ISURed] (5,5) to (2,8);
    \draw[line width=3pt,color=violet] (9,1) to (5,5);
    \draw[line width=3pt,color=ISUYellow] (9,1) to (10,0);
\end{scope}
\draw[line width=.5pt] (0,0) to (0,10);
\draw[line width=.5pt] (10,10) to (0,10);
\draw[line width=.5pt] (10,10) to (10,0);
\draw[line width=.5pt] (10,0) to (0,0);
\node[] () at  (5,1) {$\times$};
\node[] () at  (2,1) {$\times$};
\fill[color=green] (2,8) circle (0.2);
\fill[color=green](5,5) circle (0.2);
\fill[color=blue](9,1) circle (0.2);
\draw[color=white](9,1) circle (0.1);
\end{tikzpicture}
}
\def\picB{
\begin{tikzpicture}[scale=.4]
\draw[-] (0,10) to (10,0);
\draw[-] (1,4.2) to (1,9);
\draw[-] (1.2,4) to (6,4);
\draw[-] (4,1.2) to (4,6);
\draw[-] (4.2,1) to (9,1);
\begin{scope}
    \clip (0,0) rectangle (10,10);
    \draw[line width=3pt,color=ISUYellow] (0,10) to (1,9);
    \draw[line width=3pt,color=ISURed] (4,6) to (1,9);
    \draw[line width=3pt,color=violet] (4,6) to (6,4);
    \draw[line width=3pt,color=ISURed] (9,1) to (6,4);
    \draw[line width=3pt,color=ISUYellow] (9,1) to (10,0);
\end{scope}
\draw[line width=.5pt] (0,0) to (0,10);
\draw[line width=.5pt] (10,10) to (0,10);
\draw[line width=.5pt] (10,10) to (10,0);
\draw[line width=.5pt] (10,0) to (0,0);
\node[] () at  (1,4) {$\times$};
\node[] () at  (4,1) {$\times$};
\fill[color=green] (1,9) circle (0.2);
\fill[color=blue](6,4) circle (0.2);
\fill[color=green](4,6) circle (0.2);
\fill[color=blue](9,1) circle (0.2);
\end{tikzpicture}
}
\def\picC{
\begin{tikzpicture}[scale=.4]
\draw[-] (0,10) to (10,0);
\draw[-] (2,5.2) to (2,8);
\draw[-] (2.2,5) to (5,5);
\draw[-] (2,2.2) to (2,4.8);
\draw[-] (2.2,2) to (8,2);
\begin{scope}
    \clip (0,0) rectangle (10,10);
    \draw[line width=3pt,color=ISUYellow] (0,10) to (2,8);
    \draw[line width=3pt,color=violet] (5,5) to (2,8);
    \draw[line width=3pt,color=ISURed] (8,2) to (5,5);
    \draw[line width=3pt,color=ISUYellow] (8,2) to (10,0);
\end{scope}
\draw[line width=.5pt] (0,0) to (0,10);
\draw[line width=.5pt] (10,10) to (0,10);
\draw[line width=.5pt] (10,10) to (10,0);
\draw[line width=.5pt] (10,0) to (0,0);
\node[] () at  (2,5) {$\times$};
\node[] () at  (2,2) {$\times$};
\fill[color=green] (2,8) circle (0.2);
\draw[](2,8) circle (0.1);
\fill[color=blue](5,5) circle (0.2);
\fill[color=blue](8,2) circle (0.2);
\end{tikzpicture}
}
\def\picCa{
\begin{tikzpicture}[scale=.4]
\draw[-] (0,10) to (10,0);
\draw[-] (2,5.2) to (2,8);
\draw[-] (2.2,5) to (5,5);
\draw[-] (2,2.2) to (2,4.8);
\draw[-] (2.2,2) to (8,2);
\begin{scope}
    \clip (0,0) rectangle (10,10);
    \draw[line width=3pt,color=ISUYellow] (0,10) to (2,8);
    \draw[line width=3pt,color=violet] (5,5) to (2,8);
    \draw[line width=3pt,color=ISURed] (8,2) to (5,5);
    \draw[line width=3pt,color=ISUYellow] (8,2) to (10,0);
\end{scope}
\draw[line width=.5pt] (0,0) to (0,10);
\draw[line width=.5pt] (10,10) to (0,10);
\draw[line width=.5pt] (10,10) to (10,0);
\draw[line width=.5pt] (10,0) to (0,0);
\node[] () at  (2,5) {$\times$};
\node[] () at  (2,2) {$\times$};
\fill[color=green] (2,8) circle (0.2);
\draw[](2,8) circle (0.1);
\fill[color=blue](5,5) circle (0.2);
\fill[color=blue](8,2) circle (0.2);
\node[] () at  (9.25,1.65) {$\tiny a$};
\node[] () at  (6.8,4.15) {$\tiny b$};
\node[] () at  (3.8,7) {$\tiny c$};
\node[] () at  (1.45,9.4) {$\tiny d$};
\end{tikzpicture}
}
\newtheorem{theorem}{Theorem}
\newtheorem{lemma}[theorem]{Lemma}
\newtheorem{proposition}[theorem]{Proposition}
\theoremstyle{definition}
\newtheorem*{claim}{Claim}
\definecolor{ISURed}{RGB}{206, 17, 38}
\definecolor{ISUYellow}{RGB}{242, 191, 73}
\definecolor{ISUBrown}{RGB}{188, 94, 30}
\definecolor{forestgreen}{rgb}{0.13, 0.55, 0.15}
\definecolor{mediumpurple}{rgb}{0.4, 0.3, 0.6}
\begin{document}
\title{A generalization of Eulerian numbers\\ via rook placements}
\author{Esther Banaian\thanks{College of St.\ Benedict, Collegeville, MN 56321, USA {\tt embanaian@csbsju.edu}} \and
Steve Butler\thanks{Iowa State University, Ames, IA 50011, USA {\tt butler@iastate.edu}} \and
Christopher Cox\thanks{Carnegie Mellon University, Pittsburgh, PA 15213, USA {\tt cocox@andrew.cmu.edu}} \and
Jeffrey Davis\thanks{University of South Carolina, Columbia, SC 29208, USA {\tt davisj56@email.sc.edu}} \and
Jacob Landgraf\thanks{Michigan State University, East Lansing, MI 48824, USA {\tt landgr10@msu.edu}} \and
Scarlitte Ponce\thanks{California State University, Monterey Bay, Seaside, CA 93955, USA {\tt scponce@csumb.edu}}}
\date{\empty}
\maketitle

\begin{abstract}
We consider a generalization of Eulerian numbers which count the number of placements of $cn$ ``rooks'' on an $n\times n$ board where there are exactly $c$ rooks in each row and each column, and exactly $k$ rooks below the main diagonal.  The standard Eulerian numbers correspond to the case $c=1$.  We show that for any $c$ the resulting numbers are symmetric and give generating functions of these numbers for small values of $k$.
\end{abstract}

\section{Introduction}\label{sec:intro}
Rook placements on boards have a wonderful and rich history in combinatorics (see, e.g., Butler, Can, Haglund and Remmel \cite{rooknotes}).  Traditionally the rooks are placed in a non-attacking fashion (i.e., at most one rook in each row and column) and the combinatorial aspects come from considering variations on the board shapes.

Instead of varying the board, we could also change the restrictions on how many rooks are allowed in each row and each column.  If we have a square board and the number of rooks in each column and row is fixed, then this corresponds to counting non-negative matrices with fixed row and column sums (c.f.\ {\tt A000681}, {\tt A001500}, {\tt A257493}, etc., in the OEIS~\cite{OEIS}).

In this paper, we will look at this latter case of placing multiple rooks in each row and column more closely. We begin in Section~\ref{sec:mjp} by exploring the connections between these rook placements and juggling patterns. In Section~\ref{sec:Eulerian} we look at Eulerian numbers (which correspond to the number of non-attacking rook placements on an $n\times n$ board with a fixed number of rooks below the main diagonal) and in Section~\ref{sec:prop} generalize to the case in which $c$ rooks are placed in each row and each column. In Section~\ref{sec:GF}, we provide generating functions for special cases of these generalized Eulerian numbers. We end with concluding remarks and open problems in Section~\ref{sec:conc}.

\section{Minimal juggling patterns and rook placements}\label{sec:mjp}
Juggling patterns can be described by a \emph{siteswap} sequence listing the throws that the pattern requires, i.e., $t_1t_2\ldots t_n$ where at time $s\equiv i\pmod n$ we throw the ball so that it will land $t_i$ beats in the future.  A sequence of throws can be juggled if and only if there are no collisions, i.e., two balls landing at the same time, which is equivalent to $1+t_1, 2+t_2, \ldots, n+t_n$ being distinct modulo $n$.  One well known property of siteswap sequences is that the average of the throws is the number of balls needed to juggle the pattern (see \cite{DropsDescents,Polster}).

A \emph{minimal juggling pattern} is a valid juggling pattern $t_1 t_2\ldots t_n$ with $0 \leq t_i \leq n-1$.  These form the basic building blocks of juggling patterns since all juggling patterns of period $n$ arise by starting from some minimal juggling pattern and adding multiples of $n$ to the various throws (such additions do not affect modular conditions).  More about this approach is found in Buhler, Eisenbud, Graham and Wright \cite{DropsDescents}.

This naturally leads to the problem of enumerating minimal juggling patterns.  This is done by relating such patterns to rook placements on a square board.  In particular we will consider the $n\times n$  board $\mathcal{B}_n$, with labels on each cell $(i,j)$ given by the following rule:
\[
\left\{\begin{array}{r@{\quad}l}
j-i & \text{if }j\ge i,\\
n+j-i& \text{if }j<i.
\end{array}\right.
\]
We can interpret the rows of $\mathcal{B}_n$ as the \emph{throwing} times (modulo $n$) and the columns of $\mathcal{B}_n$ as the \emph{landing} times (modulo $n$).  The label of the cell $(i,j)$ is then the smallest possible throw required to throw at time $i$ and land at time $j$.

Given a minimal juggling pattern $t_1t_2\ldots t_n$ we form a rook placement by placing a rook in row $i$ on the cell labeled $t_i$ for $1\le i\le n$ (note that this forces the rook to be placed in the column corresponding to the landing time modulo $n$).  Since landing times are unique modulo $n$ no two rooks will be in the same column, so this forms a non-attacking rook placement with $n$ rooks.  Conversely, given a non-attacking rook placement with $n$ rooks we can form a minimal juggling pattern by reading off the cell labels of the covered square starting at the first row and reading down.  This establishes the bijective relationship between minimal juggling patterns and non-attacking rook patterns on $\mathcal{B}_n$.  An example of this is shown in Figure~\ref{fig:bij1} for the minimal juggling pattern $24234$.

\begin{figure}[hftb]
\centering
\picEEE
\caption{A non-attacking rook placement on $\mathcal{B}_5$ corresponding to the minimal juggling pattern $24234$.}
\label{fig:bij1}
\end{figure}

We can extract information about the minimal juggling pattern by properties of the rook placements, including, for example, the number of balls.

\begin{proposition}\label{prop:rooksbelow1}
The number of rooks below the main diagonal in a non-attacking rook placement on $\mathcal{B}_n$ is the same as the number of balls necessary to juggle the corresponding minimal juggling pattern.
\end{proposition}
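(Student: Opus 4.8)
The plan is to combine two ingredients: the quoted fact that the number of balls needed to juggle $t_1t_2\ldots t_n$ equals the average $\frac1n\sum_{i=1}^n t_i$ of the throws (see \cite{DropsDescents}), and a direct evaluation of that sum in terms of the rook placement. First I would fix notation: in the given non-attacking placement let the rook in row $i$ lie in column $\sigma(i)$. Because the placement is non-attacking with $n$ rooks on an $n\times n$ board, each row and each column contains exactly one rook, so $\sigma$ is a permutation of $\{1,\dots,n\}$.

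Next I would read off the throws from the cell labels of $\mathcal{B}_n$. By the labeling rule,
\[
t_i=\begin{cases}\sigma(i)-i & \text{if }\sigma(i)\ge i,\\[2pt] n+\sigma(i)-i & \text{if }\sigma(i)<i,\end{cases}
\]
and the rook in row $i$ lies strictly below the main diagonal exactly when $\sigma(i)<i$, i.e.\ exactly in the second branch (the cells with $\sigma(i)=i$ lie on the diagonal and carry label $0$, so they do not count). Letting $D=\{\,i:\sigma(i)<i\,\}$ be the set of rows whose rook is below the diagonal, summing the two cases gives
\[
\sum_{i=1}^n t_i=\sum_{i=1}^n\bigl(\sigma(i)-i\bigr)+n\,|D|.
\]
Since $\sigma$ is a permutation, $\sum_{i=1}^n\sigma(i)=\sum_{i=1}^n i$, so the first sum vanishes and $\sum_{i=1}^n t_i=n\,|D|$. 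Dividing by $n$ and invoking the average-equals-number-of-balls fact shows that the number of balls is $|D|$, which is precisely the number of rooks below the main diagonal.

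There is essentially no serious obstacle here: the argument is a one-line computation once the permutation structure is in place. The only points requiring care are (i) matching the two branches of the label formula with the notion of being below versus on the main diagonal (and not miscounting the diagonal cells), and (ii) citing the siteswap average identity rather than re-deriving it. If one wanted a self-contained proof avoiding that citation, the main work would shift to re-proving that the average of the throws of a minimal juggling pattern counts the balls; but since that statement is already recorded in the excerpt, I would simply apply it.
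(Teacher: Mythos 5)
Your proof is correct and is essentially the same as the paper's: both sum the cell labels over the placement, observe that the $\sigma(i)-i$ contributions telescope to zero because $\sigma$ is a permutation, leaving $n$ times the number of below-diagonal rooks, and then invoke the average-of-throws-equals-number-of-balls fact. Your version just makes the permutation notation and the two label cases slightly more explicit.
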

\begin{proof}
Suppose there are $k$ rooks below the main diagonal in a placement of $n$ non-attacking rooks on $\mathcal{B}_n$. Then when we sum the labels of all the cells covered by a rook, i.e.\ we sum the throw heights for the juggling sequence, we have
\[
\sum_{\ell=1}^n t_\ell = kn + \sum_{j=1}^n j - \sum_{i=1}^n i = kn.
\]
Since the average of the throws is the number of balls needed for the sequence, the claim follows. 
\end{proof}

Note that in Figure~\ref{fig:bij1} there are three rooks below the main diagonal and that the juggling pattern $24234$ requires three balls to juggle.

\subsection{Multiplex Juggling and $c$-rook placements}
A natural variation in juggling is to allow multiple balls to be caught and thrown at a time.  This is known as \emph{multiplex juggling}, and we will see that many of the basic ideas generalize well to this setting.

We will let $c$ denote a hand capacity, i.e.\ at each beat we make $c$ throws (allowing some of the throws to be $0$, which happens when the number of actual balls thrown is less than $c$).  Siteswap sequences of period $n$ now correspond to a sequence of $n$ sets, $T_1T_2\ldots T_n$, where each $T_i$ is a (multi-)set of the form $\{t_{i,1}, t_{i,2}, \ldots, t_{i,c}\}$, denoted in shorthand notation as $[t_{i,1}t_{i,2}\ldots t_{i,c}]$.  A multiplex juggling sequence is valid if and only if the \emph{juggling modular condition} is satisfied.  Namely, every $1\leq\ell\leq n$ appears exactly $c$ times in the multiset
\[
\{t_{i,j}+i\pmod n\}_{{1 \leq i \leq n} \atop {1 \leq j \leq c}}.
\]
In other words, no more than $c$ balls land at each time.\footnote{A $0$ throw indicates a ball is not landing.}  As in standard juggling patterns, the number of balls $b$ needed to juggle the pattern relates to an average.  In particular, 
\[
 \frac1n\sum_{i=1}^n \sum_{j=1}^c t_{i,j} = b.
\]
We say a multiplex juggling sequence is a \emph{minimal multiplex juggling sequence} if and only if $0\le t_{i,j}\le n-1$ for all throws $t_{i,j}$.

There is a relationship between period $n$, hand capacity $c$ multiplex juggling sequences and placements of ``rooks'' on $\mathcal{B}_n$.  This is done by generalizing from non-attacking rook placements to \emph{$c$-rook placements}, placements of $cn$ rooks with exactly $c$ rooks in every row and column, where multiple rooks are allowed in cells.

There is a bijection between minimal multiplex juggling patterns of period $n$ with hand capacity $c$ and $c$-rook placements on $\mathcal{B}_n$.  In particular, for each $i$ we place $c$ rooks in the $i$-th row corresponding to $t_{i,1},\ldots,t_{i,c}$.  Conversely, given a $c$-rook placement we can form a minimal multiplex juggling sequence by letting $T_i$ denote the cells covered by the rooks in row $i$ (with appropriate multiplicity).  An example of this is shown in Figure~\ref{fig:bij2} for the minimal multiplex juggling pattern $[24][02][14][22][03]$.

\begin{figure}[htb]
\centering
\picEEF
\caption{A $2$-rook placement on $\mathcal{B}_5$ corresponding to the minimal multiplex juggling pattern $[24][02][14][22][03]$}
\label{fig:bij2}
\end{figure}

By the same argument used for Proposition~\ref{prop:rooksbelow1} we have the following.
\begin{proposition}\label{prop:rooksbelow2}
The number of rooks below the main diagonal in a $c$-rook placement on $\mathcal{B}_n$ is the same as the number of balls necessary to juggle the corresponding minimal multiplex juggling pattern.
\end{proposition}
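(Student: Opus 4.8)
The plan is to mimic the proof of Proposition~\ref{prop:rooksbelow1} verbatim, replacing the single sum of throw heights with the double sum over the $c$ throws in each row. First I would set up notation: suppose a $c$-rook placement on $\mathcal{B}_n$ has exactly $k$ rooks strictly below the main diagonal, and let $t_{i,1},\ldots,t_{i,c}$ be the labels of the cells covered by the $c$ rooks in row $i$ (listed with multiplicity, since cells may contain several rooks). By the bijection described in the text, these are precisely the throw heights of the corresponding minimal multiplex juggling sequence $T_1T_2\ldots T_n$.

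Next I would compute $\sum_{i=1}^n\sum_{j=1}^c t_{i,j}$ directly from the cell-labeling rule. Each covered cell $(i,j)$ contributes $j-i$ if $j\ge i$ and $n+j-i$ if $j<i$; thus a cell below the diagonal contributes an extra $n$ compared to the ``raw'' value $j-i$. Since there are $k$ rooks below the diagonal, the total is
\[
\sum_{i=1}^n\sum_{j=1}^c t_{i,j}=kn+\sum_{\text{rooks }(i,j)}(j-i).
\]
Now the key point: because there are exactly $c$ rooks in every row and exactly $c$ rooks in every column, summing $j$ over all $cn$ rooks gives $c\sum_{j=1}^n j$ and summing $i$ over all $cn$ rooks gives $c\sum_{i=1}^n i$, so these cancel and the right-hand side is just $kn$. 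Dividing by $n$ and invoking the stated averaging identity $\frac1n\sum_{i}\sum_{j}t_{i,j}=b$ gives $b=k$, which is the claim.

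I do not expect any genuine obstacle here — the argument is essentially identical to Proposition~\ref{prop:rooksbelow1}, and indeed the excerpt already says ``by the same argument.'' The only point requiring a moment's care is the cancellation $\sum(j-i)=0$: in the $c=1$ case this used that the rooks form a permutation, whereas here it uses the weaker (and more symmetric) fact that the column-multiset and row-multiset of the rook positions are both $\{1^c,2^c,\ldots,n^c\}$, which is exactly the defining property of a $c$-rook placement. So the proof is a two-line verification; if anything, I would keep it as short as the original and perhaps just remark that the diagonal-crossing count $k$ replaces the per-rook excess and the row/column balance forces the remaining terms to vanish.
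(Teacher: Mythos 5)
Your argument is correct and is exactly the paper's intended proof: the paper simply states ``by the same argument used for Proposition~\ref{prop:rooksbelow1},'' and your computation — the extra $n$ per below-diagonal rook plus the cancellation $\sum(j-i)=0$ from the row/column multisets each being $\{1^c,\ldots,n^c\}$, followed by the averaging identity — is precisely that argument carried out in the $c$-rook setting.
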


For example, the multiplex juggling pattern in Figure~\ref{fig:bij2} requires four balls to juggle.

\section{Eulerian numbers}\label{sec:Eulerian}
The Eulerian numbers, denoted $\big\langle\!{n\atop k}\!\big\rangle$, are usually  defined as the number of permutations of $[n]$, $\pi=\pi_1\pi_2\ldots\pi_n$, with $k$ ascents ($\pi_i<\pi_{i+1}$), or equivalently the number of permutations with $k$ descents ($\pi_i>\pi_{i+1}$).  There is a bijection between permutations of $[n]$ with $k$ descents and permutations with $k$ drops ($i>\pi_i$), so that $\big\langle\!{n\atop k}\!\big\rangle$ also counts permutations of $[n]$ with $k$ drops (see~\cite{DropsDescents}).  Given an $n\times n$ board, with rows and columns labeled $1,2,\ldots,n$, we can use our permutation to form a non-attacking rook placement by placing rooks at positions $(i,\pi_i)$.  A drop in the permutation corresponds to a rook below the main diagonal, so we will call any rook below the main diagonal a \emph{drop}.

By Proposition~\ref{prop:rooksbelow1}, the number of drops in a non-attacking rook placement equals the number of balls necessary for the corresponding juggling pattern. Therefore, $\big\langle\!{n\atop k}\!\big\rangle$ also counts the number minimal juggling patterns of period $n$ using $k$ balls. 

\begin{table}[h]
\centering
\begin{tabular}{c|ccccccc}
$\big \langle \!{n\atop k}\! \big\rangle$\vphantom{$\bigg|$}
&$k{=}0$&$k{=}1$&$k{=}2$&$k{=}3$&$k{=}4$&$k{=}5$&$k{=}6$\\ \hline
$n{=}1$&$1$\\
$n{=}2$&$1$&$1$\\
$n{=}3$&$1$&$4$&$1$\\
$n{=}4$&$1$&$11$&$11$&$1$\\
$n{=}5$&$1$&$26$&$66$&$26$&$1$ \\
$n{=}6$&$1$&$57$&$302$&$302$&$57$&$1$ \\
$n{=}7$&$1$&$120$&$1191$&$2416$&$1191$&$120$&$1$\\
\end{tabular}
\caption{The Eulerian numbers for $1\le n\le7$.}
\label{Euler1table}
\end{table}

The Eulerian numbers have many nice properties, some of which can be seen in Table~\ref{Euler1table}.  For example, they are symmetric, i.e.\  $\big\langle\!{n \atop k}\!\big\rangle = \big\langle\!{n \atop n-k-1}\!\big\rangle$. 
This can be shown by noting if we start with a permutation with $k$ ascents and reverse the permutation, we now have $n-1-k$ ascents (i.e., ascents go to descents and vice-versa; and there are $n-1$ consecutive pairs).  We will give a different proof of this symmetry in the next section using rook placements.

Another well known property of the Eulerian numbers is a recurrence relation.
\begin{proposition}\label{prop:recurrence}
The Eulerian numbers satisfy
$\big\langle\!{n \atop k}\!\big\rangle = (n-k)\big\langle\!{n-1\atop k-1}\!\big\rangle + (k+1)\big\langle\!{n-1 \atop k}\!\big\rangle$.
\end{proposition}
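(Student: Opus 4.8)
The plan is to give the classical bijective proof, constructing permutations of $[n]$ from permutations of $[n-1]$ by inserting the new largest entry, and tracking how the descent statistic changes. Since $\big\langle\!{n\atop k}\!\big\rangle$ counts permutations of $[n]$ with exactly $k$ descents, I would fix a permutation $\sigma=\sigma_1\sigma_2\cdots\sigma_{n-1}$ of $[n-1]$ and consider each of the $n$ ways to insert the value $n$ into $\sigma$: at the front, at the back, or into one of the $n-2$ internal gaps between consecutive entries. Because $n$ exceeds every other value, each insertion either leaves the number of descents fixed or increases it by exactly one, so the whole argument reduces to a short local case analysis.

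The key step is that case analysis. Inserting $n$ at the front of $\sigma$ adds only the new pair $(n,\sigma_1)$, which is a descent, so the descent count goes up by $1$; inserting $n$ at the back adds only the ascending pair $(\sigma_{n-1},n)$, so the descent count is unchanged. Inserting $n$ into the gap between $\sigma_i$ and $\sigma_{i+1}$ replaces the pair $(\sigma_i,\sigma_{i+1})$ by the two pairs $(\sigma_i,n)$ (an ascent) and $(n,\sigma_{i+1})$ (a descent): this keeps the descent count the same if $(\sigma_i,\sigma_{i+1})$ was already a descent, and raises it by $1$ if it was an ascent. Consequently, a permutation of $[n]$ with $k$ descents arises either from a permutation of $[n-1]$ that already has $k$ descents, by inserting $n$ at the back or into one of its $k$ descent-gaps (that is $k+1$ choices), or from a permutation of $[n-1]$ with $k-1$ descents, by inserting $n$ at the front or into one of its $(n-2)-(k-1)=n-k-1$ ascent-gaps (that is $n-k$ choices). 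Deleting the entry $n$ from a permutation of $[n]$ inverts this construction, so each permutation counted by $\big\langle\!{n\atop k}\!\big\rangle$ is produced exactly once; summing over the two sources gives $\big\langle\!{n\atop k}\!\big\rangle=(n-k)\big\langle\!{n-1\atop k-1}\!\big\rangle+(k+1)\big\langle\!{n-1\atop k}\!\big\rangle$.

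The only real obstacle is bookkeeping rather than any deep idea: I must check that the front and back insertions are not double-counted against the internal gaps, that a permutation of $[n-1]$ has exactly $n-2$ adjacent pairs so that its ascent and descent counts split as $n-k-1$ and $k-1$, and that the boundary cases $k=0$ and $k=n-1$ (where one term vanishes under the convention $\big\langle\!{m\atop j}\!\big\rangle=0$ for $j<0$ or $j\ge m$) are consistent, with $\big\langle\!{1\atop 0}\!\big\rangle=1$ anchoring the recurrence. Finally, I would note that, via the descent–drop bijection and the rook correspondence recalled earlier in the paper, the same recurrence holds for the number of non-attacking rook placements on $\mathcal{B}_n$ with $k$ drops, equivalently for minimal juggling patterns of period $n$ using $k$ balls.
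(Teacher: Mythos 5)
Your proof is correct, but it is not the route the paper takes; in fact it is essentially the classical argument that the paper explicitly sets aside (``This recurrence is again proven using permutations and ascents. Here, we provide an alternate proof using rook placements and drops.''). You work with the descent statistic directly: insert the value $n$ into a permutation of $[n-1]$ and track how each of the $n$ insertion positions changes the number of descents, getting $k+1$ descent-preserving insertions and $n-k$ descent-increasing ones, with deletion of $n$ as the inverse. The paper instead argues entirely inside the rook/drop model: starting from a non-attacking rook placement on an $(n-1)\times(n-1)$ board, it adds an $n$-th row and column with a rook at $(n,n)$ and then swaps that rook with a rook on or above the diagonal (creating exactly one new drop, $n-k$ choices) or with a rook below the diagonal or no rook at all (preserving the drop count, $k+1$ choices), and checks uniqueness by locating the rooks in the last row and column and undoing the swap. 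The two arguments are in bijective correspondence under the permutation-to-rook dictionary $(i,\pi_i)$, with your insertion of $n$ matching the paper's swap against the new rook at $(n,n)$, so neither is more general; what differs is the payoff. Your version is the short, self-contained textbook proof and needs only the descent definition (plus, as you note, the descent--drop bijection if one wants the statement about drops or juggling patterns), while the paper's version is phrased in the language of drops and rook placements precisely because that is the setting it generalizes to $c$-rook placements --- which is also why the paper later asks whether this recurrence extends to $\big\langle\!{n\atop k}\!\big\rangle_{\!c}$, a question your descent-based argument does not obviously address either. Your bookkeeping is accurate: a permutation of $[n-1]$ has $n-2$ adjacent pairs, so the ascent/descent split $(n-1-k)$ versus $k-1$ in the second case gives $n-k$ total choices, and the boundary conventions you state make the recurrence consistent at $k=0$ and $k=n-1$.
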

This recurrence is again proven using permutations and ascents. Here, we provide an alternate proof using rook placements and drops.

\begin{proof}
Start by considering a non-attacking rook placement on an $(n-1) \times (n-1)$ board with $k-1$ drops. Add an $n$-th row and $n$-th column, and place a rook in position $(n,n)$. The newly added rook is not below the diagonal and so we have not created any new drops.  We can now create one additional drop by taking any rook (other than the one just added) which is \emph{on or above} the main diagonal, say in position $(i,j)$, move that rook to position $(n,j)$ and move the rook in position $(n,n)$ to position $(i,n)$.  This moves the rook in the $j$-th column below the main diagonal creating a new drop.  Since no other rook moves we now have precisely $k$ drops and a non-attacking rook placement.  Note that there are $(n-1)-(k-1)=n-k$ ways we could have chosen which rook to move, so that in total this gives $(n-k)\big\langle\!{n-1\atop k-1}\!\big\rangle$ boards of size $n\times n$ with $k$ drops.

Now, consider a non-attacking rook placement on an $(n-1)\times (n-1)$ board with $k$ drops. Add an $n$-th row and $n$-th column, and place a rook in position $(n,n)$.  As before we switch, but now only switch with a rook which is \emph{below} the main diagonal (i.e., a drop).  This will not change the number of drops, so the result is a non-attacking rook placement on an $n\times n$ board with $k$ drops. There are $k$ rooks we can choose to switch with, or alternatively, we can leave the $n$-th rook in position $(n,n)$; thus, there are $k+1$ ways to build the desired rook placement, so that in total this gives $(k+1)\big\langle\!{n-1\atop k}\!\big\rangle$ boards of size $n\times n$ with $k$ drops.

Finally, we note that each $n\times n$ board with $k$ drops is formed uniquely from one of these operations.  This can be seen by taking such a board and then noting the location of the rook(s) in the last row and in the last column.  Suppose these are in positions $(i,n)$ and $(n,j)$, respectively.  We then move these rooks to positions $(i,j)$ and $(n,n)$.  This can at most decrease the number of drops by one (i.e, moving the rook in the last column does not affect the number of drops).  Now removing the last row and column gives an $(n-1)\times(n-1)$ board having a non-attacking rook placement with either $k$ or $k-1$ drops.
\end{proof}

\section{Generalized Eulerian numbers}\label{sec:prop}
The \emph{generalized Eulerian numbers}, denoted $\big\langle\!{n\atop k}\! \big\rangle_{\!c}$, are the number of $c$-rook placements on the $n \times n$ board with $k$ drops. Just as the Eulerian numbers count the number of minimal juggling patterns of period $n$ with $k$ balls, the generalized Eulerian numbers count the number of minimal multiplex juggling patterns of period $n$ with $k$ balls and hand capacity $c$.  Notice that the generalized Eulerian numbers reduce to the Eulerian numbers when $c=1$.  In Table~\ref{tab:gendata} we give some of the generalized Eulerian numbers for $c=2$ and $3$.

\begin{table}[!ht]
\centering
\begin{tabular}{r|rrrrrrrrrrr}
$\big\langle\!{n\atop k}\!\big\rangle_{\!2}$\vphantom{$\bigg|$}&$k{=}0$&$k{=}1$&$k{=}2$&$k{=}3$&$k{=}4$&$k{=}5$&$k{=}6$&$k{=}7$&$k{=}8$\\ \hline
$n{=}1$&$1$\\
$n{=}2$&$1$&$1$&$1$\\
$n{=}3$&$1$&$4$&$11$&$4$&$1$\\
$n{=}4$&$1$&$11$&$72$&$114$&$72$&$11$&$1$\\
$n{=}5$&$1$&$26$&$367$&$1492$&$2438$&$1492$&$367$&$26$&$1$
\end{tabular}

\bigskip

\begin{tabular}{r|rrrrrrrrrr}
$\big\langle\!{n\atop k}\!\big\rangle_{\!3}$\vphantom{$\bigg|$}&$k{=}0$&$k{=}1$&$k{=}2$&$k{=}3$&$k{=}4$&$k{=}5$&$k{=}6$&$k{=}7$&$k{=}8$&$k{=}9$ \\ \hline
$n{=}1$&$1$ \\
$n{=}2$&$1$&$1$&$1$&$1$\\
$n{=}3$&$1$&$4$&$11$&$23$&$11$&$4$&$1$\\
$n{=}4$&$1$&$11$&$72$&$325$&$595$&$595$&$325$&$72$&$11$&$1$
\end{tabular}
\caption{Small values of the generalized Eulerian numbers for $c=2$ and $3$.}
\label{tab:gendata}
\end{table}

These numbers appear to satisfy a symmetry property similar to Eulerian numbers.  We will give two proofs of this symmetry, one in terms of rook placements and the other using minimal multiplex juggling patterns.

\begin{theorem}\label{thm:symmetry}
Let $n$, $k$ and $c$ be non-negative integers.  Then $ \big\langle\!{n\atop k}\!\big\rangle_{\!c} = \big\langle\!{n\atop c(n-1)-k}\!\big\rangle_{\!c}$.
\end{theorem}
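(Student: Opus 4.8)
The plan is to prove the identity by exhibiting an explicit involution that sends a configuration with $k$ drops to one with $c(n-1)-k$ drops, and to describe it in both of the languages the paper has set up: $c$-rook placements on $\mathcal{B}_n$ and minimal multiplex juggling sequences. (The number of drops of a $c$-rook placement ranges over $0,1,\dots,c(n-1)$, the value $c(n-1)$ being attained only by the placement whose rooks all lie on cells labeled $n-1$, so the claim is the expected reflection of this range about its midpoint.)

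For the rook version, given a $c$-rook placement $R$ I form $R'$ by placing, for each rook of $R$ in cell $(i',j')$, a rook of $R'$ in cell $(n+1-i',\,\rho(j'))$, where $\rho$ is the involution of $\{1,\dots,n\}$ fixing $n$ and sending $j\mapsto n-j$ for $1\le j\le n-1$ (i.e.\ $\rho(j)\equiv -j\pmod n$); equivalently, flip the board top-to-bottom and replace every occupied cell's label $\ell$ by $n-1-\ell$. Since $(i',j')\mapsto(n+1-i',\rho(j'))$ permutes the cells of $\mathcal{B}_n$, this is a bijection on $c$-rook placements, and it is an involution. For the drop count, write $(i,j)$ for the image of $(i',j')$; when $j'\le n-1$ one has $i-j=(n+1-i')-(n-j')=1-(i'-j')$, and $\rho$ fixes column $n$. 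From these two facts one checks that every rook of $R$ below the diagonal maps on or above the diagonal, every rook of $R$ in column $n$ maps on or above the diagonal, and every remaining rook (on or above the diagonal, in a column other than $n$) maps strictly below the diagonal. Hence the number of drops of $R'$ equals the number of rooks of $R$ lying on or above the diagonal in columns $1,\dots,n-1$, which is $cn$ minus the $k$ rooks below the diagonal minus the $c$ rooks in column $n$, namely $c(n-1)-k$.

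For the juggling version, given a valid minimal multiplex sequence $T_1T_2\cdots T_n$ of period $n$ and hand capacity $c$, I define $\widetilde T_i=\{\,n-1-t : t\in T_{n+1-i}\,\}$ as a multiset --- reverse the order of the throw-sets and complement each throw via $t\mapsto n-1-t$. Each $\widetilde T_i$ is again a size-$c$ multiset of values in $\{0,\dots,n-1\}$, and $T\mapsto\widetilde T$ is visibly an involution. The one point to verify is the juggling modular condition: reindexing by $i'=n+1-i$ gives $i+(n-1-t)\equiv -(i'+t)\pmod n$, so the multiset of landing residues of $\widetilde T$ is the image under $x\mapsto -x$ of that of $T$, and since negation permutes $\mathbb{Z}/n\mathbb{Z}$ and $T$ hits each residue exactly $c$ times, so does $\widetilde T$. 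Finally the ball count obeys $\frac1n\sum_i\sum_{s\in\widetilde T_i}s=\frac1n\bigl(cn(n-1)-\sum_i\sum_{t\in T_i}t\bigr)=c(n-1)-b$, where $b$ is the number of balls of $T$; by Proposition~\ref{prop:rooksbelow2} the number of balls equals the number of drops on each side, so this too gives a bijection between placements with $k$ drops and placements with $c(n-1)-k$ drops.

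The delicate point in both proofs is the boundary bookkeeping. In the rook version the term $-c$ in $c(n-1)-k$ comes exactly from the rooks in the last column together with the cell $(n,n)$ behaving differently from the other diagonal cells under $\rho$, so those cells must be tracked explicitly rather than folded into a blanket ``below $\leftrightarrow$ above'' statement --- this is where I expect the only real friction. In the juggling version the only content is that validity is preserved, which the negation-of-landing-residues identity disposes of cleanly. Before writing up I would check both maps against the $n=2$ and $n=3$ rows of Table~\ref{tab:gendata}, where the extremes $\big\langle\!{n\atop 0}\!\big\rangle_{\!c}=\big\langle\!{n\atop c(n-1)}\!\big\rangle_{\!c}=1$ and the several self-paired entries make any sign error easy to catch.
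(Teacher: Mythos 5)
Your proposal is correct and takes essentially the same approach as the paper: the juggling half coincides with the paper's second proof (reverse the sequence and complement each throw via $t\mapsto n-1-t$), and the rook half is the same label-complementation idea, realized by the involution $(i,j)\mapsto(n+1-i,\,-j\bmod n)$ instead of the paper's cyclic column shift followed by a transpose --- both cell maps send a cell labeled $\ell$ to one labeled $n-1-\ell$. Your drop bookkeeping (drops and column-$n$ rooks map on or above the diagonal, everything else maps strictly below, giving $cn-k-c$) is accurate.
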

\begin{proof}%[Rook placement proof of Theorem~\ref{thm:symmetry}]
We construct a bijection between the rook placements with $k$ rooks below the main diagonal and those with $c(n-1)-k$ rooks below the diagonal. Consider a rook placement with $c$ rooks in every row and column, and $k$ rooks below the diagonal. Now, shift every rook one space to the right cyclically. Let us consider the number of rooks which are \emph{strictly} above the main diagonal. 
\begin{itemize}
\item All $c$ rooks in the last column were shifted to the first column. So, none of these rooks are above the main diagonal. 
\item All of the $k$ rooks that were initially below the main diagonal are now either on or still below the main diagonal.
\item All other rooks will be above the diagonal. 
\end{itemize}
Since there are $cn$ rooks on the board total, there are $cn - c - k = c(n-1)-k$ rooks above the diagonal after this shift. Finally, we switch the rows and columns of the board. This flips the rook placement across the main diagonal. After this transformation, there are now $c(n-1)-k$ rooks \emph{below} the main diagonal. This composition of transformations is invertible by switching rows and columns then shifting every rook left one space. Thus, the transformation gives a bijection, completing the proof.
\end{proof}

Before we can give the second proof, we must first establish some basic properties of (multiplex) juggling sequences.

\begin{lemma}\label{scale}
If $T_1 T_2 \ldots T_n$ satisfies the juggling modular conditions with hand capacity $c$, and $\alpha \in \mathbb{Z}_n$ with $\gcd(\alpha,n)=1$, then $(\alpha{T}_{1\alpha^{-1}})(\alpha{T}_{2\alpha^{-1}})\ldots(\alpha{T}_{n\alpha^{-1}})$, where
\[
\alpha T_i := \{\alpha t_{i,1},\ldots,\alpha t_{i,c}\},
\]
and the subscripts are taken modulo $n$, also satisfies the juggling modular conditions.
\end{lemma}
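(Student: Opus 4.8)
The plan is to verify directly that the transformed sequence satisfies the juggling modular condition, namely that every residue $1 \le \ell \le n$ appears exactly $c$ times in the multiset $\{\alpha t_{i\alpha^{-1},\, j} + i \bmod n\}$ as $i$ ranges over $\mathbb{Z}_n$ and $j$ over $\{1,\dots,c\}$. The natural move is to re-index the outer sum: since $\alpha$ is a unit mod $n$, the substitution $i = \alpha m$ is a bijection on $\mathbb{Z}_n$, so the landing-time multiset of the transformed pattern equals $\{\alpha t_{m,j} + \alpha m \bmod n\}_{m,j} = \{\alpha(t_{m,j}+m) \bmod n\}_{m,j}$. Thus the new landing multiset is exactly the image under multiplication-by-$\alpha$ of the old landing multiset $\{t_{m,j}+m \bmod n\}_{m,j}$.

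From there the argument is immediate: multiplication by $\alpha$ is a bijection of $\mathbb{Z}_n$, so it sends a multiset in which every residue occurs exactly $c$ times to another multiset with the same property. Since $T_1\ldots T_n$ is a valid pattern with hand capacity $c$, its landing multiset has every residue $c$ times; hence so does the landing multiset of $(\alpha T_{1\alpha^{-1}})\ldots(\alpha T_{n\alpha^{-1}})$, which is precisely the juggling modular condition for the new sequence. I would also remark that the construction is an involution-free bijection on valid patterns with inverse given by the unit $\alpha^{-1}$, which is worth stating since the second proof of Theorem~\ref{thm:symmetry} will presumably invoke it.

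There is essentially no hard step here; the only thing to be careful about is bookkeeping with the subscripts. One should be explicit that in the transformed sequence the $i$-th throw-set is $\alpha T_{i\alpha^{-1}}$, so its contribution to the landing multiset is $\{\alpha t_{i\alpha^{-1},j} + i\}$, and then confirm that as $i$ runs over $\mathbb{Z}_n$ the index $i\alpha^{-1}$ also runs over all of $\mathbb{Z}_n$ — this is exactly where $\gcd(\alpha,n)=1$ is used, both to make $\alpha^{-1}$ meaningful and to make the re-indexing a bijection. No nonnegativity or minimality hypothesis is needed, consistent with the statement being about the modular conditions alone.
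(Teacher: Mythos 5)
Your proposal is correct and is essentially the same argument as the paper's: you rewrite the new landing multiset as $\{\alpha(t_{i\alpha^{-1},j}+i\alpha^{-1})\}$, re-index using that $i\mapsto i\alpha^{-1}$ permutes $\mathbb{Z}_n$, and conclude from the fact that multiplication by the unit $\alpha$ permutes residues, exactly as in the paper's proof. No gaps; the remark about the inverse transformation is a harmless addition the paper does not need at this point.
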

\begin{proof}
We have
\[
A=\lbrace \alpha t_{i\alpha^{-1},j} + i \rbrace_{{1 \leq i \leq n}\atop{1 \leq j \leq c}}
=\lbrace \alpha\big( t_{i\alpha^{-1},j} + i\alpha^{-1}\big) \rbrace_{{1 \leq i \leq n}\atop{1 \leq j \leq c}}
=\lbrace \alpha\big( t_{i',j} + i'\big) \rbrace_{{1 \leq i' \leq n}\atop{1 \leq j \leq c}},
\]
where we use that $\gcd(\alpha,n)=1$ so that $\alpha$ is invertible modulo $n$ and as $i$ ranges between $1$ and $n$, then so does $i':=i\alpha^{-1}$.  Since $\lbrace t_{i,j} + i \rbrace_{{1 \leq i \leq n}\atop{1 \leq j \leq c}}$ has $c$ occurrences each of $1$ through $n$ then scaling by $\alpha$ and taking terms modulo $n$ we also have that $A$ will have $c$ occurrences each of $1$ through $n$.
\end{proof}

\begin{lemma}\label{shift}
If $T_1 T_2 \ldots T_n$ satisfies the juggling modular conditions of hand capacity $c$, and $\beta \in \mathbb{Z}$, then $(T_1 + \beta)(T_2 + \beta) \ldots (T_n + \beta)$, where
\[
T_i + \beta := [t_{i,1}+\beta, t_{i,2}+\beta, \ldots, t_{i,c}+\beta],
\]
still satisfies the juggling modular conditions. 
\end{lemma}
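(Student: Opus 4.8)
The plan is to run essentially the same multiset computation as in the proof of Lemma~\ref{scale}, noting that here the situation is strictly simpler: the translation by $\beta$ acts directly on the landing times and does not disturb the indices, so no coprimality hypothesis and no reindexing are needed. Writing $t_{i,j}' := t_{i,j} + \beta$ for the throws of the shifted sequence, I would form the multiset of landing times and immediately factor out $\beta$:
\[
A = \big\{\, t_{i,j}' + i \pmod n \,\big\}_{{1 \le i \le n}\atop{1 \le j \le c}} = \big\{\, (t_{i,j} + i) + \beta \pmod n \,\big\}_{{1 \le i \le n}\atop{1 \le j \le c}}.
\]

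Next I would invoke the hypothesis that $T_1 T_2 \ldots T_n$ satisfies the juggling modular conditions of hand capacity $c$, which says precisely that the multiset $\{ t_{i,j} + i \pmod n \}$ contains each residue $1, 2, \ldots, n$ exactly $c$ times. Since $x \mapsto x + \beta \pmod n$ is a bijection of $\mathbb{Z}_n$ (with inverse $x \mapsto x - \beta$), applying it entrywise to a multiset with $c$ copies of each residue again produces a multiset with $c$ copies of each residue. Hence $A$ has exactly $c$ occurrences of each of $1, \ldots, n$, i.e.\ $(T_1 + \beta)(T_2 + \beta) \ldots (T_n + \beta)$ satisfies the juggling modular conditions.

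The only thing to watch — and this is as close as the argument comes to an obstacle — is the bookkeeping about residues: $\beta$ may be negative or exceed $n$, so one should state explicitly that translation by $\beta$ permutes the residue classes modulo $n$, and that ``each of $1, \ldots, n$'' is to be read cyclically. Contrast this with Lemma~\ref{scale}, where one must use $\gcd(\alpha,n) = 1$ and reindex via $i' = i\alpha^{-1}$; neither step is needed here, which is why the factorization above can be read off in one line. I expect the complete proof to be two or three lines long.
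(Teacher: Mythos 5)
Your proposal is correct and matches the paper's own argument: both observe that the multiset $\{t_{i,j}+i\}$ is simply translated by $\beta$ modulo $n$, which permutes the residues and so preserves the property of containing each residue exactly $c$ times. Your extra remark that translation is a bijection of $\mathbb{Z}_n$ just makes explicit a step the paper leaves implicit.
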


\begin{proof}
The multiset $A = \lbrace (t_{i,j} + \beta) + i \rbrace_{{1 \leq i \leq n}\atop{1 \leq j \leq c}}$ is found by taking $\lbrace t_{i,j} + i \rbrace_{{1 \leq i \leq n}\atop{1 \leq j \leq c}}$ and shifting each element by $\beta$. Since $T_1T_2\ldots T_n$ satisfy the juggling modular conditions then so also must $A$. 
\end{proof}

\begin{proof}[Juggling proof of Theorem~\ref{thm:symmetry}]
We show there is a bijection between the minimal multiplex juggling sequences using $k$ balls and those using $c(n-1)-k$ balls for a fixed length $n$ and hand capacity $c$.  So let $T_1 T_2 \ldots T_n$ be a valid minimal multiplex juggling sequence with $k$ balls and hand capacity $c$. By Lemma~\ref{scale} and Lemma~\ref{shift}, if we scale each $T_i$ by $-1$ (reversing the indexing) and add $n-1$ then the resulting sets still  satisfy the modular juggling conditions. In particular we have that the following satisfies the modular juggling condition:
\[
(n-1-T_n)(n-1-T_{n-1})\ldots(n-1-T_1). 
\]
We also note the resulting throws all lie between $0$ and $n-1$ so that this is indeed a minimal juggling pattern.

The number of balls in the new juggling sequence is
\[
\frac1n\sum_{i=1}^n\sum_{j=1}^c \big((n-1)-t_{i,j}\big)= \frac1n\bigg(cn(n-1)-\sum_{i=1}^n\sum_{j=1}^ct_{i,j}\bigg)=c(n-1)-k.
\]
Finally, we note that this operation is its own inverse, and thus gives the desired bijection.
\end{proof}

\section{Generalized Eulerian numbers for small $k$}\label{sec:GF}
We now look at determining the values of the generalized Eulerian numbers $\big\langle\!{n\atop k}\!\big\rangle_{\!c}$ for small $k$.  This depends of course on both $n$ and $c$.  However, for a fixed $k$ there are only finitely many $c$ that need to be considered.  This is a consequence of the following lemma.

\begin{lemma}\label{lem:boundc}
For $c\ge k$ we have $\big\langle\!{n\atop k}\!\big\rangle_{\!c}=\big\langle\!{n\atop k}\!\big\rangle_{\!k}$.
\end{lemma}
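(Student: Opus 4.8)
The plan is to set up an explicit bijection between the objects counted by $\big\langle\!{n\atop k}\!\big\rangle_{\!c}$ and those counted by $\big\langle\!{n\atop k}\!\big\rangle_{\!k}$ whenever $c\ge k$. Since only whether a rook sits below the main diagonal matters for counting drops, I will encode a $c$-rook placement on $\mathcal{B}_n$ as a non-negative integer matrix $M=(m_{rs})_{1\le r,s\le n}$ with every row sum and every column sum equal to $c$; the number of drops is then $\sum_{r>s}m_{rs}$.

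The crux is the following claim: in any $c$-rook placement with $k$ drops, every diagonal entry satisfies $m_{ii}\ge c-k$. To prove it, fix $i$ and let $B_i$ be the set of cells $(r,s)$ with $r\ge i$ and $s\le i$. Every cell of $B_i$ except $(i,i)$ satisfies $r>s$, so the number of drops lying in $B_i$ equals (the number of rooks in $B_i$) $-\,m_{ii}$. On the other hand, rows $i,i+1,\dots,n$ contain $c(n-i+1)$ rooks altogether, while the $n-i$ columns indexed $i+1,\dots,n$ hold only $c(n-i)$ rooks in total; hence at least $c$ of the rooks in rows $i,\dots,n$ lie in columns $1,\dots,i$, i.e.\ inside $B_i$. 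Combining the two observations, $k\ge (\text{rooks in }B_i)-m_{ii}\ge c-m_{ii}$, so $m_{ii}\ge c-k\ge 0$.

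With the claim in hand the bijection is immediate. Starting from a $c$-rook placement with $k$ drops, subtract $c-k$ from each diagonal entry; by the claim the entries stay non-negative, each row and column sum becomes $k$, and since no off-diagonal entry changed the number of drops is still exactly $k$, so we land on a $k$-rook placement with $k$ drops. The inverse map adds $c-k$ to each diagonal entry of a $k$-rook placement with $k$ drops, which plainly yields a $c$-rook placement with $k$ drops all of whose diagonal entries are $\ge c-k$; the two maps are visibly mutually inverse. This establishes $\big\langle\!{n\atop k}\!\big\rangle_{\!c}=\big\langle\!{n\atop k}\!\big\rangle_{\!k}$.

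Essentially all of the work is in the claim $m_{ii}\ge c-k$; the maps themselves and the verification that they are inverse are pure bookkeeping. I expect the only place demanding care to be the counting argument for the block $B_i$: one must get the index ranges right and check that $(i,i)$ really is the unique non-drop cell of $B_i$, after which the pigeonhole count "at least $c$ rooks in $B_i$" follows cleanly. (A phrasing in the language of minimal multiplex juggling sequences is also possible, adding a constant to the throws at a suitable set of beats, but the rook/matrix formulation seems the most transparent.)
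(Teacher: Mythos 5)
Your proof is correct, and while it shares the paper's overall skeleton---first show that every diagonal cell of a $c$-rook placement with $k$ drops carries at least $c-k$ rooks, then biject by removing or adding $c-k$ rooks on each diagonal cell---you establish the key claim by a genuinely different argument. The paper proves the claim by induction on $c+k$: it views the placement as a regular bipartite graph, uses Hall's Marriage Theorem to peel off a perfect matching (a $1$-rook placement), and applies the induction hypothesis to the remaining $(c-1)$-rook placement; its alternative proof routes through the juggling interpretation, using that the number of drops equals the number of balls. You instead give a direct double count on the lower-left block $B_i=\{(r,s): r\ge i,\ s\le i\}$: the $n-i$ columns to the right of column $i$ can absorb only $c(n-i)$ of the $c(n-i+1)$ rooks sitting in rows $i,\dots,n$, so at least $c$ rooks lie in $B_i$, and every rook of $B_i$ outside the cell $(i,i)$ is a drop, whence $m_{ii}\ge c-k$. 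Your index bookkeeping checks out (the only non-drop cell of $B_i$ is indeed $(i,i)$), and the subsequent bijection is exactly the natural one. The payoff of your route is that it is more elementary and self-contained---no Hall's theorem, no induction, no reliance on the juggling correspondence---and it can be seen as the rook-placement translation of the paper's one-line juggling argument (with $k$ balls, each beat throws at most $k$ balls); the paper's inductive proof, by contrast, highlights the decomposability of $c$-rook placements into matchings, which is of independent interest but not needed here.
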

\begin{proof}
It will suffice to establish the following claim.
\begin{claim}
Every $c$-rook placement with $k$ drops has at least $c-k$ rooks in every entry on the main diagonal.
\end{claim}
We proceed to establish this by using induction on $k+c$.  For $k+c=1$, the only possible case is $k=0$ and $c=1$ for which there is only one placement, namely one rook in each cell on the main diagonal.

Now assume that we have established the claim for all $k,c$ with $k+c<\ell$, and let $k+c=\ell$.  Let $S$ be a $c$-rook placement with $k$ drops. We can interpret the rook placement as an incidence relationship of a regular bipartite graph.  By Hall's Marriage Theorem, we know we can find a perfect matching in this bipartite graph which corresponds to $T$, a $1$-rook placement contained in $S$. Suppose there are $i$ drops in $T$.  Then, $S - T$ is a $(c-1)$-rook placement with $k - i$ drops. Since $(c-1) + (k-i) < c+k = \ell$, by our induction hypothesis, there are at least $c-k+i-1$ rooks on each entry on the main diagonal in $S - T$, and hence also in $S$. If $i \geq 1$, we are done. If $i = 0$, then $T$ is again the unique $1$-rook placement where every rook is on the main diagonal, so $S$ still has at least $c-k$ rooks on each entry on the main diagonal. 
\end{proof}

This can also be established in terms of minimal multiplex juggling patterns.
\begin{proof}[Juggling proof of Lemma~\ref{lem:boundc}]
If there are $k$ balls, then at each step we can throw at most $k$ balls, i.e., each $T_i$ has at least $c-k$ entries of $0$.  It follows that in the corresponding $c$-rook placement each row has at least $c-k$ rooks on the diagonal.
\end{proof}

We will be looking at the generalized Eulerian numbers $\big\langle\!{n\atop k}\!\big\rangle_{\!c}$ for $k=1,2,3$.  By Lemma~\ref{lem:boundc} this reduces down to only six cases to consider, namely, 
$\big\langle\!{n\atop 1}\!\big\rangle_{\!1}$, 
$\big\langle\!{n\atop 2}\!\big\rangle_{\!1}$, 
$\big\langle\!{n\atop 2}\!\big\rangle_{\!2}$, 
$\big\langle\!{n\atop 3}\!\big\rangle_{\!1}$, 
$\big\langle\!{n\atop 3}\!\big\rangle_{\!2}$ and 
$\big\langle\!{n\atop 3}\!\big\rangle_{\!3}$.  Since $\big\langle\!{n\atop k}\!\big\rangle_{\!1}=\big\langle\!{n\atop k}\!\big\rangle$, then $\big\langle\!{n\atop 1}\!\big\rangle_{\!1}$, 
$\big\langle\!{n\atop 2}\!\big\rangle_{\!1}$ and 
$\big\langle\!{n\atop 3}\!\big\rangle_{\!1}$ have been previously determined (see {\tt A000295}, {\tt A000460} and {\tt A000498}, respectively, in the OEIS \cite{OEIS}).  So that leaves 
$\big\langle\!{n\atop 2}\!\big\rangle_{\!2}$, 
$\big\langle\!{n\atop 3}\!\big\rangle_{\!2}$ and 
$\big\langle\!{n\atop 3}\!\big\rangle_{\!3}$ and in Table~\ref{tab:GF} we give the generating function for these three sequences.  In the remainder of this section we will demonstrate the techniques used to determine the generating functions by working through the case for $\big\langle\!{n\atop 2}\!\big\rangle_{\!2}$. 

\begin{table}[!htb]
\centering
\begin{multline*}
\sum_{n\ge0}{\textstyle\big\langle\!{n\atop 2}\!\big\rangle_{\!2}}x^n=
x^2+11x^3+72x^4+367x^5+1630x^6+6680x^7+26082x^8+\cdots\\
=\frac{x^2-x^3-x^4-3x^5+5x^6}{(1-x)^3(1-2x)^2(1-5x+5x^2)}
\end{multline*}
\vspace{-10pt}
\begin{multline*}
\sum_{n\ge0}{\textstyle\big\langle\!{n\atop 3}\!\big\rangle_{\!2}}x^n=
4x^3+114x^4+1492x^5+13992x^6+109538x^7+769632x^8+\cdots\\
=\frac{\displaystyle{4x^3+2x^4-300x^5+1748x^6-4676x^7+7058x^8-6648x^9\hspace{0.75in}\atop \hspace{2.75in} +4397x^{10}-2206x^{11}+625x^{12}}}{(1-x)^4(1-2x)^3(1-5x+5x^2)^2(1-8x+13x^2)}
\end{multline*}
\vspace{-10pt}
\begin{multline*}
\sum_{n\ge0}{\textstyle\big\langle\!{n\atop 3}\!\big\rangle_{\!3}}x^n=
x^2+23x^3+325x^4+3368x^5+28819x^6+218788x^7+\cdots\\
=\frac{\displaystyle{x^2-7x^3+39x^4-336x^5+1844x^6-5545x^7+9697x^8\hspace{0.95in}\atop\hspace{1.25in}-10404x^9+7532x^{10}-4558x^{11}+2435x^{12}-700x^{13}}}{(1-x)^4(1-2x)^3(1-5x+5x^2)^2(1-10x+27x^2-20x^3)}
\end{multline*}
\caption{Generating functions for some of the generalized Eulerian numbers.}
\label{tab:GF}
\end{table}

\subsection{Placing rooks in a generic rook placement}
We break the problem of counting $c$-rook placements into several sub-problems according to the way the rooks below the main diagonal are placed relative to one another (i.e., relative placements instead of absolute placements).  Given some generic placement of the $k$ rooks below the main diagonal we can determine the number of ways to place the remaining rooks on or above the main diagonal.  We then combine the results over all possible generic placements.

We will carefully work through the rook placement shown in Figure~\ref{fig:basecase} which consists of two rooks below the main diagonal and where both rooks are in the same column and different rows.  Here $a$, $b$, $c$ and $d$ are the number of rows between the various transition points (a transition point to passing a rook, or rooks, in a row or a column as we move along the main diagonal).

\begin{figure}[htbf]
\centering
\picCa
\caption{A $2$-rook placement with two rooks below the main diagonal where both rooks are in the same column and different rows.}
\label{fig:basecase}
\end{figure}

We place the remaining rooks one row at a time starting from the bottom and going to the top.  For each new row, the way we place rooks will depend on all of the choices we have made previously.  However, it suffices to know only what is happening locally.  In particular, we only need to know how many columns can have rooks placed into them, as well as the respective numbers that can go into those columns.  We can represent these by a partition of what we will call the \emph{excess} (the total number of rooks that can still be placed in the columns \emph{after} the row has had its rooks placed).  As we move one row up the board we will gain a new column (from the diagonal) and the excess will change in one of several ways.
\begin{itemize}
\item There are no rooks below or to the left of the new diagonal cell.  Initially we now have a new column that can take up to $c$ rooks, and we place $c$ rooks in the row.  The excess remains unchanged.
\item There are $\tau$ rooks below the new diagonal cell.  Initially we have the new column, but that can only take up to $c-\tau$ rooks (i.e., $\tau$ rooks have already gone into the column), and we still have to place $c$ rooks in the row.  The excess decreases by $\tau$.
\item There are $\sigma$ rooks to the left of the new diagonal cell.  Initially we have the new column that can take up to $c$ rooks, and we place $c-\sigma$ rooks in the row (i.e., $\sigma$ rooks have already gone into the row).  The excess increases by $\sigma$.
\end{itemize}
We note that it is possible for the last two situations to occur simultaneously.

In going from row to row we will transition from partitions of the old excess to partitions of the new excess.  We illustrate this with an example in which case the excesses are both $2$.  We indicate a column which can still have $r$ rooks placed into it by $\boxed{r}$, then underneath look at all possible ways we can place $2$ rooks into those columns, and finally note the resulting set of columns contributing to the new excess.

\bigskip
\noindent\hfil
\begin{minipage}[t]{0.4\textwidth}
\centering
\begin{tabular}{c@{\quad}ccc}
$\boxed2$&$\boxed2$\\[5pt]
$2$&$0$&$\,\to\,$&$\boxed2$\\[5pt]
$0$&$2$&$\to$&$\boxed2$\\[5pt]
$1$&$1$&$\to$&$\boxed1\,\,\boxed1$\\
\phantom{$\boxed1$}
\end{tabular}
\end{minipage}\hfil
\begin{minipage}[t]{0.4\textwidth}
\centering
\begin{tabular}{c@{\quad}c@{\quad}ccc}
$\boxed2$&$\boxed1$&$\boxed1$\\[5pt]
$2$&$0$&$0$&$\,\to\,$&$\boxed1\,\,\boxed1$\\[5pt]
$1$&$1$&$0$&$\to$&$\boxed1\,\,\boxed1$\\[5pt]
$1$&$0$&$1$&$\to$&$\boxed1\,\,\boxed1$\\[5pt]
$0$&$1$&$1$&$\to$&$\boxed2$
\end{tabular}
\end{minipage}
\bigskip

This can be modeled by a transition matrix where the \emph{columns} of the transition matrix correspond to the excess of the original row and the \emph{rows} of the transition matrix correspond to the partitions of the excess of the new row.
\[
\bordermatrix{~ & \boxed2 & \boxed1\,\boxed1\cr~~\boxed2 & 2 & 1 \cr \boxed1\,\boxed1 & 1  &  3\cr}.
\]

Repeating this for all possible situations that might arise for transitioning between excesses $0$, $1$, or $2$, we get the transition matrices in the following table.

\bigskip

\noindent\hfil
\begin{tabular}[b b]{rc}\label{tbl:transitions}
&Transition from \\[7pt]
\rotatebox[origin=c]{90}{Transition to}
&\begin{tabular}{|c||c|c|c|}
\hline
&$\emptyset$&$\boxed1$&$\begin{matrix}\boxed2 & ~\boxed1\,\boxed1\end{matrix}$\\ \hline \hline
$\emptyset$ & $(1)$ & $(1)$ & $\begin{pmatrix}1~~~ & 1\end{pmatrix}$\\ \hline
$\boxed1$ & $(1)$ & $(2)$ & $\begin{pmatrix}2~~~ & 3\end{pmatrix}$\\ \hline
$\begin{matrix}\boxed2 \\\boxed1\,\boxed1\end{matrix}$&
$\begin{pmatrix}1 \\0\end{pmatrix}$& 
$\begin{pmatrix}1 \\1\end{pmatrix}$&
$\begin{pmatrix}2~~~ & 1 \\ 1~~~ & 3 \end{pmatrix}$ \\ \hline
\end{tabular}
\end{tabular}

\bigskip

We now start below the bottom row (in $1$ possible way) and we move up from row to row and multiply on the \emph{left} by the transition that we perform between the two rows.  At any point we stop, the resulting vector will denote the number of ways to fill up the board to that row such with a particular excess.  In particular, if we carry this procedure all the way to the top we will get a $1\times 1$ matrix whose entry is the number of ways to fill in the rooks on and above the main diagonal.

For Figure~\ref{fig:basecase}, where we have of runs of $a$, $b$, $c$ and $d$ rows as well as three other transitions to make, the resulting product that gives our count is as follows
\[
(1)^d\begin{pmatrix}1&1\end{pmatrix}\begin{pmatrix}2&1\\1&3\end{pmatrix}^c\begin{pmatrix}1\\1\end{pmatrix}(2)^b(1)(1)^a.
\]
Finally, for this generic rook placement we sum over all possible choices of $a$, $b$, $c$ and $d$ that gives an $n\times n$ board, i.e.,
\[
\sum_{a+b+c+d=n-3} (1)^d\begin{pmatrix}1&1\end{pmatrix}\begin{pmatrix}2&1\\1&3\end{pmatrix}^c\begin{pmatrix}1\\1\end{pmatrix}(2)^b(1)(1)^a.
\]
In order to help evaluate this sum, we will add in an extra parameter $x$ that keeps track of how many of each transition we made, or viewed another way the power of $x$ corresponds to the number of rows we have.  Therefore when counting the number of placements on an $n\times n$ board, we are interested in the coefficient of $x^n$ of the expression
\[
\sum_{a,b,c,d\ge0} (x)^dx\begin{pmatrix}1&1\end{pmatrix}\begin{pmatrix}2x&x\\x&3x\end{pmatrix}^cx\begin{pmatrix}1\\1\end{pmatrix}(2x)^bx(x)^a.
\]
This sum can be decomposed as a combination of geometric sums giving
\begin{align*}
&\sum_{a,b,c,d\ge0} (x)^dx\begin{pmatrix}1&1\end{pmatrix}\begin{pmatrix}2x&x\\x&3x\end{pmatrix}^cx\begin{pmatrix}1\\1\end{pmatrix}(2x)^bx(x)^a \\
&= x^3\bigg(\sum_{d\ge0}x^d\bigg)\begin{pmatrix}1&1\end{pmatrix}\bigg(\sum_{c\ge0}\begin{pmatrix}2x&x\\x&3x\end{pmatrix}^c\bigg)\begin{pmatrix}1\\1\end{pmatrix}\bigg(\sum_{b\ge0}(2x)^b\bigg)\bigg(\sum_{a\ge0}x^a\bigg)\\
&= x^3\cdot\frac1{1-x}\cdot\begin{pmatrix}1&1\end{pmatrix}\bigg(I-\begin{pmatrix}2x&x\\x&3x\end{pmatrix}\bigg)^{-1}\begin{pmatrix}1\\1\end{pmatrix}\cdot\frac1{1-2x}\cdot\frac1{1-x}\\
&= \frac{x^3}{(1-x)^2(1-2x)}\cdot\begin{pmatrix}1&1\end{pmatrix}\bigg(\frac1{1-5x+5x^2}\begin{pmatrix}1-3x&x\\x&1-2x\end{pmatrix}\bigg)\begin{pmatrix}1\\1\end{pmatrix}\\
&=\frac{x^3(2-3x)}{(1-x)^2(1-2x)(1-5x+5x^2)}.
\end{align*}

This is the generating function for one of the generic ways to place rooks.  We can now repeat this procedure for every way in which we can place rooks below the main diagonal and add the individual generating functions together.  All the seven generic cases, with their corresponding generating functions, are shown in Figure~\ref{fig:cases}.  Adding the individual generating functions together then gives us the overall generating function that was given in Table~\ref{tab:GF}.

\begin{figure}[!htb]
\centering
\begin{tabular}{c}
  \begin{tabular}{c||c||c}
    \picE & \picF & \picG\\
    $\displaystyle\frac{x^4}{(1-x)^3(1-2x)^2}$ & $\displaystyle\frac{x^2(1-2x)}{(1-x)^2(1-5x+5x^2)}$ & $\displaystyle\frac{2x^3}{(1-x)^2(1-2x)^2}$ \\
  \end{tabular}\\
  \begin{tabular}{c}
  \end{tabular}\\
  \begin{tabular}{c||c}
    \picA & \picB\\
    $\displaystyle\frac{x^4(5-7x)}{(1-x)^2(1-2x)^2(1-5x+5x^2)}$ & $\displaystyle\frac{x^4(5-7x)}{(1-x)^2(1-2x)^2(1-5x+5x^2)}$\\
  \end{tabular}\\
  \begin{tabular}{c}
  \end{tabular}\\
  \begin{tabular}{c||c}
    \picC & \picD\\
     $\displaystyle\frac{x^3(2-3x)}{(1-x)^2(1-2x)(1-5x+5x^2)}$ & $\displaystyle\frac{x^3(2-3x)}{(1-x)^2(1-2x)(1-5x+5x^2)}$
  \end{tabular}
\end{tabular}
\caption{All generic $2$-rook placements and corresponding generating functions.}
\label{fig:cases}
\end{figure}

This same process works for determining the generating function of $\big\langle\!{n\atop k}\!\big\rangle_{\!c}$ for any fixed $k$ and $c$.  The main challenge lies in that the number of generic cases that have to be considered grows drastically as we increase $c$ and $k$.  This is demonstrated in Table~\ref{tab:placements}.  It is possible to automate this process, which was used for determining the generating functions for $k=3$ given in Table~\ref{tab:GF}.

\begin{table}[htbf]
\[
\begin{array}{c|rrrrrrr}
&c{=}1&c{=}2&c{=}3&c{=}4&c{=}5&c{=}6&c{=}7\\ \hline
k{=}1&1 \\
k{=}2&4&7 \\
k{=}3&26&68&75 \\
k{=}4&236&940&1090&1105 \\
k{=}5&2752&16645&20360&20790&20821\\
k{=}6&39208&360081&464111&477242&478376&478439 \\
k{=}7&660032&9202170&12492277&12933423&12974826&12977688&12977815 
\end{array}
\]
\caption{The number of generic $c$-rook placements with $k$ rooks below the main diagonal.}
\label{tab:placements}
\end{table}

\section{Conclusion}\label{sec:conc}
The generalized Eulerian numbers are a natural extension of the Eulerian numbers, at least in regards to the interpretation coming from rook placements.  We have also seen that these numbers exhibit a symmetry similar to that of the Eulerian numbers.  It would be interesting to know which other properties and relationships involving Eulerian numbers generalize.  Some natural candidates to try and generalize include the following.
\begin{itemize}
\item Is there a generalization of the recurrence in Proposition~\ref{prop:recurrence} for Eulerian numbers to generalized Eulerian numbers?  Related to this, is there a simple generating function for the generalized Eulerian numbers?
\item Is there a generalization of Worpitzky's identity, $x^n=\sum_k \big\langle\!{n\atop k}\!\big\rangle{x+k\choose n}$, to generalized Eulerian numbers?  Worpitzky's identity is used in counting the number of juggling patterns (see \cite{DropsDescents}), so a generalization might be useful in counting multiplex juggling patterns.
\item Is there a generalization of the identity of Chung, Graham and Knuth \cite{CGR},
\[
\sum_k{\textstyle {a+b\choose k} \big\langle\!{k\atop a-1}\!\big\rangle}=
\sum_k{\textstyle {a+b\choose k} \big\langle\!{k\atop b-1}\!\big\rangle}
\]
(This uses the convention $\big\langle\!{0\atop0}\!\big\rangle=0$.)
\end{itemize}
More information about the Eulerian numbers and various identities and relationships that could be considered are given in Graham, Knuth, Patashnik \cite[Section 6.2]{Concrete}.

We also note the original motivation for investigating these numbers was looking into the mathematics of multiplex juggling.  There is a close connection between the mathematics of juggling and the mathematics of rook placements.  We hope to see this relationship strengthened in future work.

\bigskip

\noindent\textbf{Acknowledgments.}~~The research was conducted at the 2015 REU program held at Iowa State University which was supported by NSF DMS 1457443.

\end{document}